\numberwithin{equation}{section}
\newcommand {\Dim}{\textrm{dim}}
\newcommand {\irr}{\textrm{Irr}}
\newcommand {\Img}{\textrm{Im}}
\newcommand {\codim}{\textrm{codim}}
\newcommand {\Rep}{\textrm{Rep}}
\newcommand {\Hom}{\textrm{Hom}}
\newcommand {\Ext}{\textrm{Ext}}
\newcommand {\Mod}{\textrm{mod}}
\newcommand{\frg}{\mathfrak{g}}
\newtheorem{theo}{Theorem}[section]
\newtheorem{lem}{Lemma}[section]
\newtheorem{cor}{Corollary}[section]
\newtheorem{remark}{Remark}[section]
\date{}
\begin{document}
\begin{CJK*}{GBK}{song}
\title{\bf The matrix between PBW basis and semicanonical basis of $U^+(sl_n(\mathbb{C}))$$^\star$}
\author{{Hongbo Yin,   Shunhua Zhang}\\
        {\small School of Mathematics, Shandong University,
        Jinan, 250100,P.R.China}}
\maketitle

\begin{abstract} In this paper, we prove that the matrix between a special PBW basis
and the semicanonical basis of $U^+(sl_n(\mathbb{C}))$ is upper
triangular unipotent under any order which is compatible with the
partial order $\leq_{deg}$.
\end{abstract}

\vskip0.1in

{\bf Key words and phrases:}\ Representations of quivers,
degeneration, preprojective algebras, semicanonical basis.

\vskip0.1in

{\bf MSC(2000):}  16G20, 17B37

\footnote{ $^\star$Supported by the NSF of China (Grant No.
11171183)}

\footnote{  Email addresses: \ yinhongbo0218@126.com(H.Yin),
 \ shzhang@sdu.edu.cn(S.Zhang)}

\section{Introduction}

Assume $Q$ is a simply-laced Dynkin graph, and $\frg$ is the corresponding simple Lie algebra.
Let $U_q^+$ be the plus part of the quantized enveloping algebra of $\frg$.
It has the PBW type basis which is important in the study of $U_q^+$.
But the the PBW type basis is dependent of the orientation we choose for $Q$.
In \cite{L1}, Lusztig defined the canonical basis of $U_q^+$.
Later, the canonical basis was extended to general case by Lusztig in \cite{L3}
and independently by Kasiwara in \cite{K} called crystal basis.
The canonical basis is parameterized by the irreducible
components of the nilpotent variety of the preprojective
algebra of $Q$ \cite{L2,KS} and is independent of the orientation
we choose for $Q$ and has a lot of good properties. When $q$ limits to $1$,
we get the canonical basis of $U^+$, the plus part of enveloping algebra of $\frg$.
In \cite{L4}, Lusztig defined another basis of $U^+$ called semicanonical basis.
The semicanonical basis doesn't coincide with the canonical basis in general,
but they share some common properties. For example, they are compatible with
various filtrations of $U^+$, compatible with the canonical antiautomorphism of
$U^+$ and projects to a basis of irreducible modules.
It is known that the matrix between the PBW basis and the
canonical basis is upper triangular unipotent (implicit in the proof of the existence of
canonical bases of Lusztig in \cite{L1}).
So we may ask wether the assertion is true for semicanonical basis?
In this paper we show the answer is positive for a special PBW basis of type $A_n$.
That is

\begin{theo} Let $\frg$ be $sl_n(\mathbb{C})$ and $U^+$ be
the universal enveloping algebra of the positive part of $\frg$.
We choose the linear orientation $\overrightarrow{A_n}$ for the
Dynkin graph $A_n$ of $\frg$. Let $\{f_M\}_{M\in \Rep(\overrightarrow{A_n}, V)}$
and $\{P_M\}_{M\in \Rep(\overrightarrow{A_n}, V)}$ be the semicanonical basis
and the PBW basis of $U_V^+$. Then, $f_M=\sum_{M\leq_{deg}N}a_NP_N$ and $a_M=1$.
\end{theo}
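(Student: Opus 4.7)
My plan is to work inside Lusztig's realization of $U^+$ as the algebra $\mathcal{M}$ of $G_V$-invariant constructible functions on the nilpotent varieties $\Lambda_V$ of the preprojective algebra of $A_n$, with multiplication given by convolution. Irreducible components of $\Lambda_V$ are indexed by isomorphism classes in $\Rep(\overrightarrow{A_n},V)$: to each $M$ one attaches the component $Z_M = \overline{T^*_{\mathcal{O}_M} E_V}$, and a generic point of $Z_M$ may be taken to be $(x,0)$ with $x \in \mathcal{O}_M$. In this picture the semicanonical basis element $f_M$ is pinned down by the condition $f_M(\xi_L) = \delta_{M,L}$ at a generic point $\xi_L$ of each $Z_L$. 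Writing $f_M = \sum_N a_N P_N$ and evaluating at $\xi_L$ reduces the theorem to the claim that the matrix $A_{N,L} := P_N(\xi_L)$ is upper triangular unipotent with respect to $\leq_{deg}$: then the inverse matrix $(a_N)$ inherits the same triangularity, which is exactly the content of the theorem.

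The heart of the proof is the evaluation of $P_N$ at $(x,0)$. For the linear orientation of $A_n$ every indecomposable is an interval module $[i,j]$, and the PBW element $P_N$ attached to $N = \bigoplus [i,j]^{n_{i,j}}$ is, for the Ringel ordering dictated by the orientation, the ordered product of divided powers $E_{[i,j]}^{(n_{i,j})}$. Since $P_N$ is an iterated convolution of characteristic functions of the origin in single-root nilpotent varieties, a fibration argument exploiting the vanishing of the $\bar{\alpha}$-component at $(x,0)$ should identify $P_N(x,0)$ with the Euler characteristic $\chi(\mathcal{F}^x_N)$ of the variety of filtrations $0 = L_0 \subset L_1 \subset \cdots \subset L_r = x$ in $\Rep(\overrightarrow{A_n},V)$ whose successive quotients realize the chosen ordering of the indecomposable summands of $N$.

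The triangularity of $A$ then follows from the Bongartz--Riedtmann description of degeneration in Dynkin type: $N \leq_{deg} L$ if and only if some (equivalently every) representation in $\mathcal{O}_L$ admits a filtration by sub-representations whose subquotients are isomorphic to the indecomposable summands of $N$. Hence $\mathcal{F}^x_N = \emptyset$ and $A_{N,L} = 0$ unless $N \leq_{deg} L$; and for $N = L$ the Ringel order together with the divided-power normalization forces $\chi(\mathcal{F}^x_L) = 1$, so $A_{L,L} = 1$. Inverting $A$ yields the claimed expansion $f_M = \sum_{M \leq_{deg} N} a_N P_N$ with $a_M = 1$.

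The main obstacle is the identification in the second step: showing rigorously that the Lusztig convolution $P_N(x,0)$ coincides with the concrete Euler characteristic $\chi(\mathcal{F}^x_N)$. One must unfold the iterated convolution through the moment map of the preprojective algebra, verify that the $\bar{\alpha}$-variables integrate out cleanly at a point of the zero section, and check that the resulting count is truly over filtrations of the underlying $\overrightarrow{A_n}$-module $x$ rather than of some preprojective enhancement.
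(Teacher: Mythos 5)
There is a genuine gap, and it sits at the very first step of your plan: you take a generic point of $Z_M$ to be a point $(x,0)$ of the zero section with $x\in\mathcal{O}_M$. This is false except when $\mathcal{O}_M$ is the dense orbit in $\Rep(\overrightarrow{A_n},V)$: the component $Z_M=\overline{\pi_Q^{-1}(\mathcal{O}_M)}$ is the closure of the conormal variety to $\mathcal{O}_M$, of dimension $\dim\Rep(\overrightarrow{A_n},V)$, while the zero-section part $\mathcal{O}_M\times\{0\}$ has dimension $\dim\mathcal{O}_M$ and is therefore nowhere dense in $Z_M$. Consequently the defining property of the semicanonical basis, $\rho_{Z_L}(f_M)=\delta_{M,L}$, says nothing about the value $f_M(x,0)$; the paper's own example shows this concretely, since $f_{M_2}=P_{M_2}+2P_{M_1}$ takes the value $2$ at points $(x,0)$ with $x\in\mathcal{O}_{M_1}$. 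Worse, if you do evaluate at zero-section points, then by the characterization of the PBW elements (Lemma 2.1, from Gei\ss--Leclerc--Schr\"oer) the restriction of $P_N$ to $\Rep(\overrightarrow{A_n},V)$ is exactly the characteristic function of $\mathcal{O}_N$, so your matrix $A_{N,L}=P_N(x,0)$, $x\in\mathcal{O}_L$, is the identity matrix and the argument would ``prove'' $f_M=P_M$, which the example refutes. If instead you insist on honestly generic points of $Z_L$, then the starred arrows do not vanish there, and the convolution defining $P_N$ counts (Euler characteristics of) flags of $\Lambda$-submodules of a genuinely preprojective module, not filtrations of the $\overrightarrow{A_n}$-module $L$; the ``main obstacle'' you mention at the end is not a technical verification but precisely the point where the strategy breaks. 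Note also that proving triangularity and unipotence of the matrix $\bigl(\rho_{Z_L}(P_N)\bigr)$ is just the inverse-matrix reformulation of the theorem itself, so nothing has been gained unless those generic values can actually be computed.

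For comparison, the paper avoids evaluating $P_N$ on components altogether. It argues by induction on $\Dim V$ (and downward along $\leq_{deg}$), using Lusztig's algorithm $f_M=1_{S_i^m}*f_{M'}-\sum_{t_i(Z_L)>m}\rho_{Z_L}(1_{S_i^m}*f_{M'})\,f_L$ at a vertex $i$ with $t_i(M)=m>0$, $t_{i+1}(M)=0$, together with Bongartz-type generic-extension lemmas (Lemmas 3.3--3.5) which guarantee that $M$ is the generic extension of $S_i^m$ by $M'$ and that every module on which $1_{S_i^m}*P_{M''}$ is nonzero, for $M'\leq_{deg}M''$, is a degeneration of $M$; Lemma 3.8 controls the correction terms. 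If you want to salvage a ``matrix evaluation'' proof you would need a genuine computation of $\rho_{Z_L}(P_N)$ on the conormal components, which is a substantially harder problem than the filtration count you propose.
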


The same question was investigated for simply-laced Dynkin type in
\cite{BK}. Here we want to use the representations of quivers to
understand it, and our method is very different and more direct.

This paper is organized as follows: in section 2,
we recall the definition and some facts about the semicaonical basis;
in section 3, we make some preparations and then prove our main result.

{\bf Acknowledgements}\ \  The authors would like to thank professor
Jan.Schr\"{o}er for answering a lots of questions and pointing the
paper of P.Baumann and J.Kamniter. The authors also would like to
thank professor K.Bongartz to provide the proof of lemma 3.3 and
lemma 3.4 below.

\section{Preliminary}

\subsection{Enveloping algebra}
Let $Q$ be a simply-laced Dynkin graph and $Q_0$ be the set of vertices. Let $a_{ij}$ be the minus number
of the edges connecting the vertices $i$ and $j$ if $i\neq j$ and $a_{ii}=2$. Then we get a Cartan datum.
The algebra $U^+$ is the associative algebra over $\mathbb{C}$ generated by $\{e_i\}_{i\in Q_0}$ with Serre relations

\[\sum_{p,q\in \mathbb{N};p+q=-a_{ij}+1}(-1)^p\frac{e_i^p}{p!}e_j\frac{e_j^q}{q!}=0\]
for $i\neq j$.

Let $V=\bigoplus_{i\in Q_0}V_i$ be a $Q_0$-graded vector space. We write $|V|=\sum_i\Dim V_i i\in \mathbb{N}[Q_0]$.
Let $U_V^+$ be the subspace of $U^+$ generated by the monomials $e_{i_1}e_{i_2}\cdots e_{i_n}$ for various
sequence $i_1, i_2, \cdots, i_n$ in which $i$ appears $\Dim V_i$ times for any $i\in Q_0$. This gives a grade of $U^+$.

\subsection{Preprojective algebra}
We choose an orientation for $Q$, then $Q=(Q_0, Q_1)$ became a Dynkin quiver of ADE type, where $Q_0$ is
the set of vertices and $Q_1$ is the set of arrows. Given an arrow $\alpha$, we denote by $s(\alpha)$ the
start point of $\alpha$ and by $t(\alpha)$ the end point of $\alpha$. Let $\overline{Q}$ be the double quiver of $Q$,
which is obtained from $Q$ by adding an arrow $\alpha^\ast: j\rightarrow i$ whenever there is an arrow $\alpha: i\rightarrow j$ in $Q$.
Let $Q_1^*=\{\alpha^*| \alpha\in Q_1\}$ and $\overline{Q}_1=Q_1\cup Q_1^*$. The preprojective algebra associated with $Q$ is defined as

\[\Lambda=\Lambda_Q=K\overline{Q}/(c_i)_{i\in Q_0}\]
where $c_i$ is the relation

\[c_i=\sum_{\alpha\in \overline{Q}_1, s(\alpha)=i}(\alpha^\ast\alpha)\]
and $K\overline{Q}$ is the path algebra of $\overline{Q}$. Note that the preprojective algebra is independent of the orientation of $Q$.

\subsection{Representation variety}
Given a quiver $Q$, let $V=\bigoplus_{i\in Q_0}V_i$ be a $Q_0$-graded vector space.
Let $\Rep(Q, V)=\prod_{h\in Q_1}\Hom_k(V_{s(h)}, V_{t(h)})$ be the representation variety of the quiver $Q$.
There is an action of algebraic group $G_V=\prod_{i\in Q_0}GL(V_i)$ on $\Rep(Q, V)$ which is defined by $g\cdot x=(g_{t(h)}x_hg_{s(h)}^{-1})$.
Let $M\in \Rep(Q,V)$, we denote $\mathcal {O}_M$ the orbit of $M$ under the action of $G_V$.
We denote by $\Lambda_V$ the nilpotent variety of the preprojective algebra $\Lambda$,
that is the elements $(x_h)$ of $\Rep(\overline{Q}, V)$ satisfying: (1) for all $i\in Q_0$,
$\sum_{h\in Q_1, s(h)=i}x_{h^*}x_{h}=\sum_{h\in Q_1, t(h)=i}x_{h}x_{h^*}$; (2) there is
an integer $N$ such that for any sequence $h_1, h_2, \cdots, h_n$ with $n\geq N$ in $\overline{Q}_1$,
the $x_{h_n}\cdots x_{h_2}x_{h_1}$ is zero. There is a functor $\pi_Q$ from $\Lambda_V$ to $E_Q$ which
sends $x_h$ to $x_h$ when $h\in Q_1$ and to zero when $h\in Q_1^*$.

For Dynkin case, it was proved that $\Lambda_V$ is just all the $\Lambda$-modules with
underlying vector space $V$ in \cite{L3}. Let $\irr~\Lambda_V$ be the set of the irreducible components
of $\Lambda_V$. It is known that every element of $\irr~\Lambda_V$ is of the form $\overline{\pi_Q^{-1}(\mathcal{O}_M)}$
where $M$ is the element of $\Rep(Q, V)$ and $\overline{\pi^{-1}(\mathcal{O}_M)}$ means the
closure of $\pi_Q^{-1}(\mathcal{O}_M)$ in the variety $\Lambda_V$ \cite{L3}. So we denote the
irreducible component of $\Lambda_V$ by $Z_M$ where $M$ means an isomorphism class in $\Rep(Q, V)$.
The affine space $\Rep(Q, V)$ is a subset of $\Lambda_V$ and it is an irreducible component of $\Lambda_V$.

Let
\[\Lambda_{V,i,p}=\{(x_h)\in \Lambda_V\ |\ \codim_{V_i}(\sum_{h\in\overline{Q_1}, t(h)=i}\Img~(x_h:V_{s(h)}\rightarrow V_i))=p\}.\]
For $i\in Q_0$ and $Z\in \irr~\Lambda_V$, there is a unique $p$ such that $\Lambda_{V,i,p}\cap Z$ is
open dense in $Z$. Set $t_i(Z)=p$. This is a function from $\irr~\Lambda_V$ to $\mathbb{N}$ defined by Lusztig
in \cite{L4}. Similarly, for $M\in \Rep(Q,V)$, we define $t_i(M)=\codim_{V_i}(\sum_{h\in Q_1, t(h)=i}\Img(x_h:V_{s(h)}\rightarrow V_i))$.

Let $x\in \Lambda_{V,i,p}$. Then there is a unique subrepresentation $x'$ of $x$ such that $x'\in \Lambda_{V',i,0}$, where $|V|=|V'|+pi$.
This was proved in \cite[12.5]{L4}.

\subsection{Semicanonical basis and PBW basis}
A subset of an affine variety  is called constructible if it is a finite union of locally closed subsets. A function from an
affine variety to a vector space is called constructible if the image set is finite and the converse image of a point is a constructible subset.

Let $\widetilde{\mathcal{M}}_V$ be the vector space of constructible functions from $\Lambda_V$ to $\mathbb{C}$ which is
constant on the same orbit and let $\widetilde{\mathcal{M}}=\bigoplus_V \widetilde{\mathcal{M}}_V$. Then $\widetilde{\mathcal{M}}$
becomes an associative algebra under the product $\ast$ defined below.
Let $|V|=|V'|+|V''|$, $f\in \widetilde{\mathcal{M}}_{V'}, g\in \widetilde{\mathcal{M}}_{V''}$ and $x\in \Lambda_{V}$, define

\[(f\ast g)(x)=\int_{x''}f(x/x'')g(x'')\]
where $x''$ is the subrepresentation of $x$ with underlying vector space $V''$.

We denote by $S_i$ the simple representation corresponding to vertices $i$ and $1_i$ the function
which maps $\mathcal {O}_{S_i}$ to 1. Let $\mathcal{M}$ be the subalgebra of $\widetilde{\mathcal{M}}$
generated by $\{1_i\}$, then there is an algebra isomorphism $\Phi$ between $\mathcal{M}$ and $U^+$ which sends $1_i$ to $e_i$ \cite[12.13]{L3}.

Let $f\in \widetilde{\mathcal{M}}$. For each $Z_M\in \irr~\Lambda_V$, there is a unique $c\in \mathbb{Q}$
such that $f^{-1}(c)\cap Z_M$ contains an open dense subset of $Z_M$. We denote the function sending $f$ to $c$ by $\rho_{Z_M}$.

\begin{theo}[Lusztig] Let $Z\in \rm Irr~\Lambda_V$. There exists $f\in \mathcal{M}_V$ such that $\rho_Z(f)=1$
and $\rho_{Z'}(f)=0$ for any $Z'\in\rm Irr~\Lambda_V-\{Z\}$. This is a basis of $\mathcal{M}_V$.
\end{theo}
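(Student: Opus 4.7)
The plan is to establish that the evaluation map $\Psi: \mathcal{M}_V \to \mathbb{C}^{\irr~\Lambda_V}$ defined by $\Psi(f) = (\rho_Z(f))_{Z \in \irr~\Lambda_V}$ is a linear isomorphism. Both parts of the theorem then follow at once: the functions $f_Z$ are the preimages under $\Psi$ of the standard basis vectors, they satisfy $\rho_Z(f_Z) = 1$ and $\rho_{Z'}(f_Z) = 0$ by construction, and they form a basis of $\mathcal{M}_V$ precisely because $\Psi$ is an isomorphism.

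The first step is a dimension count. The algebra isomorphism $\Phi: \mathcal{M} \to U^+$ sending $1_i$ to $e_i$ yields $\Dim \mathcal{M}_V = \Dim U^+_V$, and the right-hand side equals the number of isomorphism classes in $\Rep(Q, V)$ via any PBW basis. Combined with the parametrization $M \mapsto \overline{\pi_Q^{-1}(\mathcal{O}_M)}$ of $\irr~\Lambda_V$ recalled in Section 2.3, we obtain $\Dim \mathcal{M}_V = |\irr~\Lambda_V|$. Hence it suffices to prove that $\Psi$ is surjective.

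The heart of the argument is an inductive construction of sufficiently many functions, proceeding by induction on $|V|$. The base case $|V| = i$ is immediate from $1_i$. For the inductive step, fix $Z \in \irr~\Lambda_V$, choose a vertex $i$, set $p = t_i(Z)$, and write $|V| = |V'| + p\,i$. The subrepresentation statement of \cite[12.5]{L4} recalled at the end of Section 2.3 associates to a generic $x \in Z$ a unique $x' \in \Lambda_{V',i,0}$, and thereby assigns to $Z$ a distinguished $Z' \in \irr~\Lambda_{V'}$ with $t_i(Z') = 0$. By induction there is $f_{Z'} \in \mathcal{M}_{V'}$ isolating $Z'$; I would then show that a suitable scalar multiple of the convolution $1_i^{\ast p} \ast f_{Z'}$ has $\rho$-value $1$ at $Z$ and vanishes on every $Z''$ with $t_i(Z'') < p$. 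Ordering $\irr~\Lambda_V$ first by decreasing $t_i$ (refined by iterating the procedure at other vertices) then produces an upper-unitriangular system whose inversion yields the $f_Z$.

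The main obstacle is the precise evaluation of $\rho_{Z''}(1_i^{\ast p} \ast f_{Z'})$ together with the design of an ordering making the resulting matrix triangular. By the definition of $\ast$, the value is an Euler-characteristic integral over the Grassmannian of subrepresentations $x''$ of a generic $x \in Z''$ with quotient of dimension vector $p\,i$, necessarily isomorphic to $S_i^p$ since $\overline{Q}$ has no loops in the Dynkin case. For $t_i(Z'') < p$ no such subrepresentation exists and the integral vanishes; for $t_i(Z'') = p$ uniqueness in \cite[12.5]{L4} collapses the Grassmannian to a single point, and $f_{Z'}(x'')$ returns $1$ exactly when that point lies in the $Z'$ associated to $Z$. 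The delicate issue is whether the assignment $Z \mapsto Z'$ is injective on each $t_i$-stratum; if it is only surjective, one must iterate the construction at distinct vertices and show that the resulting system of functions is still unitriangular in a common refinement of the $t_i$-orderings. This combinatorial-geometric coordination, together with the bookkeeping of the $p!$ factors from iterated convolutions of the $1_i$, is where the real technical work lies.
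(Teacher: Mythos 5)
The paper never proves this statement---it is quoted from Lusztig and used as a black box---so there is no internal proof to compare with; what you have written is essentially a reconstruction of Lusztig's own argument, the very construction the paper later invokes in the form $f_M=1_{S_i^m}*f_{M'}-\sum_{t_i(Z_L)>m}\rho_{Z_L}(1_{S_i^m}*f_{M'})f_L$ (Lemma 3.8 and Theorem 3.1). Your outline is the right one: the dimension count via $\Phi$ and the PBW parametrization, and surjectivity of the evaluation map $\Psi$ obtained from a unitriangular system built out of the convolutions $1_i^{\ast p}\ast f_{Z'}$, where $Z'\in\irr~\Lambda_{V',i,0}$ is attached to $Z$ via \cite[12.5]{L4}.

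As written, though, the proposal leaves open precisely the two points on which the argument stands. First, the unitriangularity on the stratum $t_i=p$ requires the assignment $Z\mapsto Z'$ to be a \emph{bijection} between $\irr~\Lambda_{V,i,p}$ and $\irr~\Lambda_{V',i,0}$; this is Lusztig's result \cite[2.3]{L4}, which the paper itself recalls in Section 3 just before Lemma 3.5, and declaring it ``the delicate issue where the real technical work lies'' without resolving it leaves the key step of the proof missing. Second, your evaluation of $\rho_{Z''}(1_i^{\ast p}\ast f_{Z'})$ conflates pointwise and generic values: $f_{Z'}(x'')$ need not be $1$ at a particular point of $Z'$, only on a dense open subset, so one must also know that for generic $x\in Z''$ (with $t_i(Z'')=p$) the unique subrepresentation $x'$ is \emph{generic} in its component of $\Lambda_{V',i,0}$; this again is part of Lusztig's analysis of the map $x\mapsto x'$. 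By contrast, the cross-vertex ordering you worry about is easily handled: since $\rho_{Z''}(1_i^{\ast p}\ast f_{Z'})$ can be nonzero for $Z''\neq Z$ only when $t_i(Z'')>p$ at the \emph{same} vertex $i$, one may either run a descending induction on $p$ at each fixed vertex, or choose for each $Z$ a vertex maximizing $t_i(Z)$ and order components by decreasing value of this maximum; in either case the evaluation matrix is unitriangular, and together with your dimension count $\Psi$ is an isomorphism, which yields both existence and the basis property.
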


The basis $\{f_Z\}$ is called the semicanonical basis of $\mathcal{M}_V$ and its image under $\Phi$ is called
the semicanonical basis of $U^+_V$ \cite{L4}. We will identify them from now on and denote them both by $B_V$.
Then $B=\cup_VB_V$ is called the semicanonical basis of $U^+$. For the Dynkin case, we also write $\{f_M\}$
for $\{f_{Z_M}\}$. Although the semicanonical basis is independent of the choice of orientation of the quiver,
the parametrization of the semicanonicl basis depents on, thus our denotation can't lead to misunderstanding.

For a Dynkin quiver $Q$, Ringel showed that the algebra $U^+$ has a basis parameterized by the isomorphism classes
of representations of the quiver, which is called PBW basis and we denote it by $\{P_M\}$,
see \cite{S}. In \cite[5.9]{GLS}, Gei\ss, Leclerc and Schr\"{o}er describe them in $\mathcal{M}$:

\begin{lem} Let $\mathcal{O}$ be a $G_V$-orbit in $\rm Rep(Q,V)$. There exists a unique $\mathcal{K}_{\mathcal{O}}\in \mathcal{M}$
whose restriction to $\rm Rep(Q,V)$ is the characteristic function of $\mathcal{O}$.
The collection of all $\mathcal{K}_{\mathcal{O}}\in \mathcal{M}$ where $\mathcal{O}$
runs through all $G_V$-orbit in $\rm Rep(Q,V)$ is equal to $\Phi^{-1}(\{P_M\})\cap \mathcal{M}$.
\end{lem}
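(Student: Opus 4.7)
The plan is to identify each $\mathcal{K}_\mathcal{O}$ directly with the PBW basis element $\Phi^{-1}(P_M)$ corresponding to $\mathcal{O}=\mathcal{O}_M$; all three assertions of the lemma then follow at once.

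The key step is the observation that for $x\in\Rep(Q,V)\subset\Lambda_V$ (so all starred components $x_{h^*}$ vanish), the preprojective subrepresentations $x''$ of $x$ entering the convolution $(f\ast g)(x)=\int_{x''}f(x/x'')g(x'')$ are exactly the $Q$-submodules of $x$: invariance under the starred arrows is automatic, the preprojective relations hold trivially, and both $x''$ and $x/x''$ lie in the appropriate representation varieties $\Rep(Q,V'')$ and $\Rep(Q,V')$. Consequently, restriction to $\Rep(Q,V)$ converts the convolution $\ast$ on $\mathcal{M}$ into the classical Hall-algebra product for the Dynkin quiver $Q$ at $q=1$. Ringel's theorem then identifies this Hall algebra with $U^+$, sending the basis element $[M]$ --- which in the functional realization is precisely the characteristic function $\chi_{\mathcal{O}_M}$ --- to the PBW element $P_M$. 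Combining the two gives $\Phi^{-1}(P_M)|_{\Rep(Q,V)}=\chi_{\mathcal{O}_M}$, so we may set $\mathcal{K}_{\mathcal{O}_M}:=\Phi^{-1}(P_M)$.

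Existence is now clear. For uniqueness, note that both $\mathcal{M}_V$ (via the PBW basis) and the space of $G_V$-invariant functions on $\Rep(Q,V)$ (via characteristic functions) have dimension equal to the number of $G_V$-orbits in $\Rep(Q,V)$; the restriction map $r:\mathcal{M}_V\to\{G_V\text{-invariant functions on }\Rep(Q,V)\}$ sends the basis $\{\Phi^{-1}(P_M)\}_M$ bijectively to the basis $\{\chi_{\mathcal{O}_M}\}_M$, hence is a linear isomorphism. The equality $\{\mathcal{K}_\mathcal{O}\}=\Phi^{-1}(\{P_M\})$ is then immediate. The main obstacle is careful verification of the key step, especially matching the functional Hall algebra obtained from $(\ast,\Rep(Q,V))$ with Ringel's Hall algebra description of $U^+$ using the correct normalization; a subsidiary technical point is the identification of preprojective subrepresentations $x''\subseteq x$ with $Q$-submodules for $x\in\Rep(Q,V)$, together with the compatibility of the quotient $x/x''$ with the $\Rep$-realization.
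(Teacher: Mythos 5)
The paper itself gives no proof of this lemma---it is quoted from \cite[5.9]{GLS}---and your argument is essentially the one given there: for $x\in\Rep(Q,V)$ the starred components vanish, so the subrepresentations entering $\ast$ are exactly the $Q$-submodules, restriction to $\bigsqcup_V\Rep(Q,V)$ is therefore an algebra homomorphism onto the Euler-characteristic (``$q=1$'') Hall algebra of $Q$, the Ringel--Lusztig isomorphism $e_i\mapsto 1_i$ sends the divided-power PBW element $P_M$ to $\chi_{\mathcal{O}_M}$, and uniqueness follows from injectivity of the restriction on the graded piece $\mathcal{M}_V$ via your dimension count. Your proof is correct; the only points worth making explicit are that ``Ringel's theorem'' must be invoked in its constructible-function form (Lusztig \cite{L3}, cf.\ \cite{S}), where the Hall numbers specialized at $q=1$ coincide with the Euler characteristics appearing in the convolution, and that uniqueness is to be read inside $\mathcal{M}_V$, since components of other degrees restrict to zero on $\Rep(Q,V)$ anyway.
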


In other words, we can identify the elements of PBW basis with the characteristic functions of the orbits in $\Rep(Q,V)$.

\subsection{Some geometry of representations of finite dimensional algebras}

This subsection is just a copy of the material in \cite{B1}.

Let $A$ be a finite dimensional associated $k$-algebras with basis $a_1=1, \cdots, a_n$.
We have the corresponding structure constants $a_{ijk}$ defined by $a_ia_j=\sum a_{ijk}a_k$.
The affine variety $Mod^d_A$ of $d$-dimensional $A$-modules is given by the $n$-tuples $m=(m_1,\cdots,m_n)$ of $d\times d$-matrices
such that $m_i$ denotes the action of $a_i$ and they satisfy $m_im_j=\sum a_{ijk}m_k$ for all $i$ and $j$.
Let $Gl_d(k)$ be the $d^2$ dimensional general linear group which acts on $Mod^d_A$ by conjugation. The orbits correspond
to the isomorphism classes of $d$-dimensional modules.

Let $r$, $t$ and $s=r+t$ be three natural numbers. Let $\mathcal{U}$ be an irreducible subvariety of $Mod^r_A$ and $\mathcal{V}$
be an irreducible subvariety of $Mod^t_A$. The space of 1-cocycles is the tuples $z=(0,z_2,\cdots,z_n)$ in $k^{r\times t}$
such that $\sum a_{ijk}z_k=u_iz_j+z_iv_j$ holds for all $i$ and $j$. Thus $Z(v,u)$ is the set of solutions of a system of
homogeneous linear equations whose coefficients depend polynomially on the entries of $u$ and $v$.
Therefore, the map $(v,u)\rightarrow \Dim Z(v,u)$ is upper semi-continuous. Let $B(v,u)$
be the subspace of coboundaries which is the image of the linear map from $k^{r\times t}$ to $Z(v,u)$
sending $h$ to the tuple with the $i$th entry $hv_i-u_ih$. Because $\Ext_A^1(V,U)=Z(V,U)/B(V,U)$, we have

\begin{eqnarray}\Dim\ Z(V,U)&=&\Dim~\Ext_A^1(V,U)+\Dim\ B(V,U)\\
                            &=&\Dim~\Ext_A^1(V,U)-\Dim~\Hom_A(V,U)+rt.
\end{eqnarray}
Thus $\Dim Z(V,U)$ is constant if and only if $\Dim~\Ext_A^1(V,U)-\Dim~\Hom_A(V,U)$ is. In this case, we have a vector bundle
\[ p:\mathcal{L}\rightarrow \mathcal{U}\times\mathcal{V}\]
with irreducible total space

\[ \mathcal{L}=\left\{\left(\begin{array}{cc}u&z\\0&v\end{array}\right)\bigg|u\in \mathcal{U}, v\in \mathcal{V}, z\in Z(v,u)\right\}.\]
The image $\mathcal{L}'$ of the conjugation

\[Gl_s\times\mathcal{L}\rightarrow Mod^s_A \]
is the irreducible constructible set of all extensions of some $V$ in $\mathcal{V}$ by some $U$ in $\mathcal{U}$.

\section{Main result}

In this section we set $\frg=sl_{n+1}(\mathbb{C})$, then the Dynkin graph of $\frg$ is of type A. Let $\overrightarrow{A_n}$ be the quiver as follows:

$$\xymatrix{1\ar[r]&2\ar[r]&\cdots\ar[r]&n-1\ar[r]&n}.$$

Let $\Lambda$ be a finite dimensional algebra and $M,N\in \Mod\ \Lambda$. We say $M$ degenerate to $N$ and
denote by $M\leq_{deg}N$, if $\mathcal{O}_N\subset\overline{\mathcal{O}}_M$. This is a partial order of $\Rep(V,Q)$,
see \cite{B1}. There is also another partial order $\leq$ define as follows. $M\leq N$
if and only if $\Dim\ \Hom(M,L)\leq \Dim\ \Hom(N,L)$ for any $\Lambda$-module $L$. In general,
we can deduce $M\leq N$ from $M\leq_{deg}N$, and the converse is not true. But for direct algebras,
especially path algebras, $\leq$ and $\leq_{deg}$ are equivalent, see \cite{B2}.

Let $A=k\overrightarrow{A_n}$ be path algebra of $\overrightarrow{A_n}$ over an algebraic closed field $k$,
then the elements of $\Rep(\overrightarrow{A_n},V)$ can be viewed as $A$-modules and we will not distinguish them.

\begin{lem} $t_n(Z_M)=t_n(M)$

\end{lem}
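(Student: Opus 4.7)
The plan is to exploit the fact that $n$ is a sink of the linear orientation $\overrightarrow{A_n}$, which makes the codimension in the definition of $t_n$ essentially one-dimensional.

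First I would identify the arrows of the double quiver $\overline{Q}$ that terminate at $n$. Since no arrow of $Q=\overrightarrow{A_n}$ starts at $n$, no reversed arrow $\alpha^\ast\in Q_1^\ast$ ends at $n$ either, so the only arrow of $\overline{Q}$ with target $n$ is $\alpha_{n-1}\colon n-1\to n$. Hence for any $x=(x_h)_{h\in\overline{Q}_1}\in\Lambda_V$,
\[
\codim_{V_n}\Bigl(\sum_{h\in\overline{Q}_1,\,t(h)=n}\Img(x_h)\Bigr)=\codim_{V_n}\bigl(\Img(x_{\alpha_{n-1}})\bigr),
\]
depending only on the single block $x_{\alpha_{n-1}}$.

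Next I would combine this with the definition of $\pi_Q$, which sends $x$ to its restriction on $Q_1$ and in particular leaves $x_{\alpha_{n-1}}$ unchanged. So if $x\in\pi_Q^{-1}(\mathcal{O}_M)$, then $\pi_Q(x)\cong M$ as an $A$-module, and the codimension above equals $t_n(M)$. This shows the inclusion
\[
\pi_Q^{-1}(\mathcal{O}_M)\ \subset\ \Lambda_{V,n,t_n(M)}\cap Z_M.
\]

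Finally, since $Z_M$ is by definition the closure of $\pi_Q^{-1}(\mathcal{O}_M)$, the left-hand side is dense in the irreducible variety $Z_M$, so $\Lambda_{V,n,t_n(M)}\cap Z_M$ is dense in $Z_M$. As the strata $\Lambda_{V,n,p}$ for varying $p$ are pairwise disjoint and $Z_M$ is irreducible, at most one value of $p$ makes $\Lambda_{V,n,p}\cap Z_M$ dense in $Z_M$. Comparing with the defining property of $t_n(Z_M)$, we conclude $t_n(Z_M)=t_n(M)$.

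There is essentially no real obstacle: the whole lemma hinges on recognising that $n$ is a sink, so that the sum defining $t_n$ is taken over a single arrow which lies in $Q_1$ and is preserved by $\pi_Q$. The only point requiring a little care is the density step, which uses irreducibility of $Z_M$ to upgrade "contains a dense subset on which $t_n$ equals $t_n(M)$" to "the value $t_n(Z_M)$ attached to the component is exactly $t_n(M)$".
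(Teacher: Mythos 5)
Your proposal is correct and follows essentially the same route as the paper: since $n$ is a sink, the only arrow of $\overline{Q}$ ending at $n$ lies in $Q_1$, so $t_n$ is constant equal to $t_n(M)$ on $\pi_Q^{-1}(\mathcal{O}_M)$, which is dense in $Z_M$, forcing $t_n(Z_M)=t_n(M)$. The paper's proof is just a terser version of the same argument, so there is nothing to add.
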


\begin{proof} Since $n$ is the sink point of $\overrightarrow{A_n}$, it is the source point of $\overrightarrow{A_n}^{op}$.
For any $M'\in \pi^{-1}(\mathcal{O}_M) $,

\[t_i(M')=\codim(\Img(x_h:V_{n-1}\rightarrow V_n))=t_i(M)\]

So $\Lambda_{V,n,t_n(M)}\cap Z_M$ contains $\pi^{-1}(\mathcal{O}_M)$ and must contain an open dense subset. So $t_n(Z_m)=t_n(M)$.

\end{proof}

\begin{lem}
           If $t_{i+1}(M)=0$, then $t_i(Z_M)=t_i(M)$
\end{lem}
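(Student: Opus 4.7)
The plan is to show that for every point $x$ in the dense subset $\pi_Q^{-1}(\mathcal{O}_M)\subseteq Z_M$ the value $t_i(x)$ already equals $t_i(M)$; a density argument inside the irreducible variety $Z_M$ then forces $t_i(Z_M)=t_i(M)$.

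First, I would unpack what $t_i$ measures at a point $x=(a,b)\in\Lambda_V$. Write $a_j:V_j\to V_{j+1}$ for the component indexed by the arrow $j\to j+1$ of $Q_1$ and $b_j:V_{j+1}\to V_j$ for its dual partner in $Q_1^\ast$. For $1\le i\le n-1$ the arrows of $\overline{Q}_1$ with target $i$ are exactly $a_{i-1}$ and $b_i$, so
\[t_i(x)=\codim_{V_i}\bigl(\Img(a_{i-1})+\Img(b_i)\bigr)\le t_i(M)\]
for every $x\in\pi_Q^{-1}(\mathcal{O}_M)$, with the convention $\Img(a_0)=0$ in the edge case $i=1$.

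Next I would invoke the preprojective relation \emph{at vertex $i$}. For $1<i\le n-1$ it reads $b_i\circ a_i=a_{i-1}\circ b_{i-1}$ (up to sign) as endomorphisms of $V_i$; for $i=1$ it collapses to $b_1\circ a_1=0$. In either case $\Img(b_i\circ a_i)\subseteq\Img(a_{i-1})$. The hypothesis $t_{i+1}(M)=0$ says precisely that $a_i:V_i\to V_{i+1}$ is surjective, because $i\to i+1$ is the only arrow of $Q_1$ ending at $i+1$. Consequently
\[\Img(b_i)=b_i(V_{i+1})=b_i\bigl(\Img(a_i)\bigr)=\Img(b_i\circ a_i)\subseteq\Img(a_{i-1}),\]
so $t_i(x)=t_i(M)$ throughout $\pi_Q^{-1}(\mathcal{O}_M)$.

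Finally, the locus in $Z_M$ on which $t_i$ attains its generic (minimum) value is open and dense, and $\pi_Q^{-1}(\mathcal{O}_M)$ is also dense in the irreducible variety $Z_M$; two such subsets must meet, which forces $t_i(Z_M)=t_i(M)$. I do not anticipate a real obstacle here. The only delicate point is the choice of which preprojective relation to apply: it must be the one at vertex $i$ rather than at $i+1$, since only that relation couples $b_i a_i$ to $a_{i-1}b_{i-1}$ and thereby converts the surjectivity of $a_i$ into the containment $\Img(b_i)\subseteq\Img(a_{i-1})$ that drives the whole argument.
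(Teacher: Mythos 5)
Your proposal is correct and follows essentially the same route as the paper: you use the preprojective relation at vertex $i$ together with the surjectivity of $a_i$ (forced by $t_{i+1}(M)=0$) to get $\Img(b_i)\subseteq\Img(a_{i-1})$ at every point of $\pi_Q^{-1}(\mathcal{O}_M)$, and then conclude by density of $\pi_Q^{-1}(\mathcal{O}_M)$ in $Z_M$. Your treatment is in fact slightly more careful than the paper's (explicit edge case $i=1$ and the remark about which vertex relation to use), but the argument is the same.
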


\begin{proof} The local part of $M$ looks as follows

\[\UseAllTwocells
\xymatrix@=20mm{V_{i-1}\rtwocell<2>^{x_{h_1}}_{x_{h_1^*}}{'}&V_i\rtwocell<2>^{x_{h_2}}_{x_{h_2^*}}{'}&V_{i+1}}.\]
Let $M'\in \pi^{-1}(\mathcal{O}_M) $.
By the relation of preprojective algebras, we have $x_{h_1}x_{h_1^*}+ x_{h_2^*}x_{h_2}=0$. Since $x_{h_2}$ is surjective,
we deduce $\Img x_{h_2^*}\subset \Img x_{h_1}$. By the definition, $t_i(M')=\codim(\Img x_{h_1})=t_i(M)$.
So $\Lambda_{V,i,t_i(M)}\cap Z_M$ contains $\pi^{-1}(\mathcal{O}_M)$ and must contain an open dense subset. So $t_i(Z_M)=t_i(M)$.
\end{proof}

The following two lemmas are proved by K.Bongartz and they hold for any Dynkin quivers not only our choice.

\begin{lem}
Let $M$ be an $A$-module and $S$ a simple $A$-module. Set
$S^m=\oplus_{i=1}^mS$. Let $\mathcal{T}$ be the set of all the
A-modules $N$ such that there is an exact sequence $0\rightarrow
M\rightarrow N\rightarrow S^m\rightarrow 0$. Then there is a module
$L$ such that all the other modules in $\mathcal{T}$ is a
degeneration of $L$ .
\end{lem}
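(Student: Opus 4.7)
The plan is to realize $\mathcal{T}$ as the union of orbits comprising an irreducible constructible set of extensions produced by Bongartz's vector-bundle construction from Section 2.5, and then to use finite representation type to pick out a generic orbit inside it.

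First, I would set $r=\Dim_k M$, $t=\Dim_k S^m$, $s=r+t$, and take $\mathcal{U}:=\mathcal{O}_M\subset Mod^r_A$ and $\mathcal{V}:=\mathcal{O}_{S^m}\subset Mod^t_A$; both are irreducible locally closed subvarieties (as $Gl$-orbits), and every $u\in\mathcal{U}$ is isomorphic to $M$ while every $v\in\mathcal{V}$ is isomorphic to $S^m$. Consequently $\Dim\Hom_A(v,u)$ and $\Dim\Ext_A^1(v,u)$ are constant on $\mathcal{U}\times\mathcal{V}$, so by formula (2.2) so is $\Dim Z(v,u)$. The hypothesis of Section 2.5 is therefore satisfied, and I obtain the vector bundle $p:\mathcal{L}\to\mathcal{U}\times\mathcal{V}$, whose base is irreducible (product of two irreducibles) and whose fibres are affine spaces of constant dimension; hence $\mathcal{L}$ is irreducible, and so is its image $\mathcal{L}'\subset Mod^s_A$ under the $Gl_s$-conjugation morphism. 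By the description recalled at the end of Section 2.5, $\mathcal{L}'$ is exactly the set of modules arising as the middle term of an extension of some $V'\cong S^m$ by some $U'\cong M$; in other words, $\mathcal{L}'=\bigcup_{N\in\mathcal{T}}\mathcal{O}_N$.

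Next I would invoke that $A$ is a path algebra of a Dynkin quiver, hence of finite representation type, so $Mod^s_A$ has only finitely many $Gl_s$-orbits. In particular $\mathcal{T}$ is finite and $\mathcal{L}'$ is a finite union of locally closed orbits $\mathcal{O}_N$. An irreducible constructible set that is a finite union of locally closed subsets must have exactly one of them open and dense, so there is a unique $L\in\mathcal{T}$ with $\mathcal{O}_L$ open dense in $\mathcal{L}'$. Then for any other $N\in\mathcal{T}$ one has $\mathcal{O}_N\subset\mathcal{L}'\subset\overline{\mathcal{O}_L}$, which is precisely $L\leq_{deg}N$; that is, every other module in $\mathcal{T}$ is a degeneration of $L$.

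The only non-formal step, and the one I would double-check, is the identification $\mathcal{L}'=\bigcup_{N\in\mathcal{T}}\mathcal{O}_N$: one must verify that sweeping a block-upper-triangular representative in $\mathcal{L}$ by $Gl_s$-conjugation really produces every isomorphism class of middle term of an extension $0\to M\to N\to S^m\to 0$. This is built into Bongartz's construction (any such extension, after choosing a splitting of the underlying graded vector space, is represented by a cocycle of the form $(0,z_2,\dots,z_n)\in Z(S^m,M)$ sitting in the fibre of $p$ over $(M,S^m)$), and is the one place where I would be careful with bookkeeping; everything else is a routine generic-orbit argument.
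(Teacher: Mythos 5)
Your proposal is correct and follows essentially the same route as the paper: take $\mathcal{U}=\mathcal{O}_M$, $\mathcal{V}=\mathcal{O}_{S^m}$ in the Section 2.5 construction, note constancy of $\Dim Z(v,u)$ via formula (2.2), conclude $\mathcal{L}'$ is irreducible, and use finite representation type to extract a dense orbit $\mathcal{O}_L$, so every other $N\in\mathcal{T}$ satisfies $L\leq_{deg}N$. You merely spell out details the paper leaves implicit (constancy of $\Hom$ and $\Ext^1$ on orbits, and the identification $\mathcal{L}'=\bigcup_{N\in\mathcal{T}}\mathcal{O}_N$), which is fine.
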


\begin{proof}

Let $r$ be the total dimension of $M$ and let $t$ be the total dimension of $S^m$ and set $s=r+t$.
Now, we take for $\mathcal{U}$ the orbit of $M$ and for $\mathcal{V}$ the orbit of $S^m$ in section 2.5.
Then $\mathcal{L}'$ is irreducible because the space of cocycles has constant dimension for obvious reasons (formula (2.2)).
Since the irreducible set $\mathcal{L}'$ is a finite union of orbits - $A$ is representation-finite - there is a dense orbit. This is the wanted module $L$.

\end{proof}

We call the module $L$ in the lemma the generic extension of $S^m$ by $M$.

\begin{lem}Let $U\leq_{deg}W$. Set $S$ is a simple $A$-module and $S^m=\oplus_{i=1}^mS$
and there is an exact sequence $0\rightarrow W\rightarrow N\rightarrow S^m\rightarrow 0$,
then there is an exact sequence $0\rightarrow U\rightarrow M\rightarrow S_i^m\rightarrow 0$ such that $M\leq_{deg}N$.

\end{lem}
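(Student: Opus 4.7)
The strategy I plan to follow is to repeat the proof of Lemma 3.3, but applied to the pair $\mathcal{U} = \overline{\mathcal{O}}_U$ and $\mathcal{V} = \mathcal{O}_{S^m}$ instead of two single orbits. The required $M$ will then emerge as the unique dense $G_s$-orbit of the irreducible constructible variety of extensions that this construction produces.

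The crucial point to verify before Section 2.5 can be applied is that $\Dim Z(v,u)$ is constant on $\overline{\mathcal{O}}_U \times \mathcal{O}_{S^m}$. By formula (2.2) this reduces to the constancy of $\Dim \Ext^1_A(v,u) - \Dim \Hom_A(v,u)$. Since $A=k\overrightarrow{A_n}$ is hereditary (and more generally $A=kQ$ for any Dynkin quiver $Q$), this difference is the negative of the Euler form $\chi(v,u)$, which depends only on the dimension vectors of $u$ and $v$. As $u$ ranges over $\overline{\mathcal{O}}_U$ its dimension vector is fixed equal to $\underline{\Dim}\, U$, and $v$ is fixed equal to $S^m$, so $\chi(v,u)$ is constant and hence so is $\Dim Z(v,u)$.

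Granting this constancy, Section 2.5 supplies an irreducible vector bundle $\mathcal{L} \to \overline{\mathcal{O}}_U \times \mathcal{O}_{S^m}$ whose $G_s$-saturation $\mathcal{L}' \subset Mod^s_A$ is an irreducible constructible subset. Because $A$ is representation-finite, $\mathcal{L}'$ decomposes into finitely many orbits and therefore contains a unique dense orbit $\mathcal{O}_M$. Since $W \in \overline{\mathcal{O}}_U$ and $N$ is an extension of $S^m$ by $W$, one has $N \in \mathcal{L}'$, so $\mathcal{O}_N \subset \overline{\mathcal{L}'} = \overline{\mathcal{O}}_M$, which is exactly $M \leq_{deg} N$. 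On the other hand, the preimage in $\mathcal{L}$ of the open dense subset $\mathcal{O}_U \times \mathcal{O}_{S^m}$ is open dense in $\mathcal{L}$, so its $G_s$-saturation is open dense in $\mathcal{L}'$ and must meet $\mathcal{O}_M$. Consequently $M$ itself is isomorphic to a block matrix with diagonal blocks lying in $\mathcal{O}_U$ and $\mathcal{O}_{S^m}$, yielding the required exact sequence $0 \to U \to M \to S^m \to 0$.

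The main obstacle is conceptual rather than technical: one needs to notice that the rigidity of the Euler form for hereditary algebras lets one promote the Section 2.5 vector bundle construction from a single orbit to the entire orbit closure of $U$. After that step, irreducibility together with representation-finiteness deliver both conclusions of the lemma in one motion.
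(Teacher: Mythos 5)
Your proposal is correct and follows essentially the same route as the paper: take $\mathcal{U}=\overline{\mathcal{O}}_U$ and $\mathcal{V}=\mathcal{O}_{S^m}$ in the Section 2.5 construction, use heredity of $A$ to get constancy of $\Dim\Ext^1_A-\Dim\Hom_A$ (hence of the cocycle spaces), and extract the dense orbit $\mathcal{O}_M$ of the irreducible set $\mathcal{L}'$ via representation-finiteness. The only (harmless) difference is the last step: you argue that the saturation of $p^{-1}(\mathcal{O}_U\times\mathcal{O}_{S^m})$ is dense and so meets the open orbit $\mathcal{O}_M$, whereas the paper intersects $\mathcal{O}_M$ with $\mathcal{L}$ and uses openness of the bundle projection $p$ — both correctly show $M$ is an extension of $S^m$ by $U$ itself.
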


\begin{proof}
 We take for $\mathcal{U}$ the closure of the orbit of $U$ and for $\mathcal{V}$ the orbit
 of $S^m$ again, then $W\in \mathcal{U}$ and $N\in \mathcal{L}$. because $A$ is hereditary,
 $\Dim~\Ext_A^1(V,U)-\Dim~\Hom_A(V,U)$ is determined by the dimension vectors of $V$ and $U$,
 then the space of cocycles has constant dimension. Again there is a dense orbit in $\mathcal{L}'$
 because $\mathcal{L}'$ is a finite union of orbits. Call this module $M$. The orbit of $M$ is open
 and so is its non-empty intersection $F$ with $\mathcal{L}$. Since the bundle-projection $p$
 from $\mathcal{L}$ to $\mathcal{U}$ is an open map and since $\mathcal{U}$ is irreducible,
 the image $p(F)$ and the orbit of $U$ ( which is open in $U$ ) intersect. Then $M$ is an extension of $S^m$ by $U$. Thus $M$ is the wanted module.

\end{proof}

Let $Z_M\in\irr~\Lambda_V$ and $t_i(Z)=p$. Then
$Z=Z_M\cap\Lambda_{V,i,p}\in \irr~\Lambda_{V,i,p}$. Given $x\in Z$,
there is a unique submodule $x'$ of $x$ such that $x/x'\cong S_i^p$.
Denote the orbit of $x'$ by $O(x)$, then $Z'=\cup_{x\in Z_M}O(x)\in
\irr~\Lambda_{V',i,0}$ and $Z=\{x\in \Lambda_{V,i,p}|O(x)\in Z'\}$.
This gives a bijection between $\irr~\Lambda_{V,i,p}$ and
$\irr~\Lambda_{V',i,0}$, see \cite[2.3]{L4}. The closure of $Z'$ in
$\Lambda_{V'}$ is an irreducible component of $\Lambda_{V'}$ and can
be written as $Z_{M'}$. It is obvious that $M'$ is a submodule of
$M$, and $M/M'\cong S_i^p$.

\begin{lem}

Let $M$ be an $A$-module with $t_i(M)=m$, $t_{i+1}(M)=0$. Set $Z_{M'}\in \rm Irr~\Lambda_{V'}$
with $t_i(Z_{M'})=0$ corresponding to $Z_M$. Then $M$ is the generic extension of $S_i^m$ by $M'$.

\end{lem}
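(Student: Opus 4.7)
My plan is to apply the bijection $\irr~\Lambda_{V,i,m}\leftrightarrow\irr~\Lambda_{V',i,0}$ of Subsection 2.3 both to $Z_M$ and to $Z_L$, where $L$ denotes the generic extension of $S_i^m$ by $M'$ produced by Lemma 3.3. Since in the Dynkin case $N\mapsto Z_N$ is injective on isomorphism classes of $A$-modules, proving that $Z_L$ also corresponds to $Z_{M'}$ under this bijection will force $Z_L=Z_M$ and therefore $L\cong M$, which is the claim.

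First I would verify that $L$ satisfies the same two numerical conditions as $M$, namely $t_i(L)=m$ and $t_{i+1}(L)=0$. For any extension $0\to M'\to N\to S_i^m\to 0$, the map $V_{i-1}^N=V_{i-1}^{M'}\to V_i^N$ factors through $V_i^{M'}\subset V_i^N$ and has image equal to $V_i^{M'}$ by the very definition of $M'$, so $t_i(N)=m$. To control $t_{i+1}$, apply the snake lemma to the short exact sequence $0\to V_i^{M'}\to V_i^M\to k^m\to 0$ together with the surjection $V_i^M\to V_{i+1}^M=V_{i+1}^{M'}$ furnished by $t_{i+1}(M)=0$: the cokernel of $V_i^{M'}\to V_{i+1}^{M'}$ is a quotient of $k^m$, whence $t_{i+1}(M')\leq m$. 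For a generic element of $\Ext^1(S_i^m,M')$ the associated map $k^m\to V_{i+1}^{M'}$ surjects onto this cokernel, giving $t_{i+1}(L)=0$.

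Lemma 3.2 applied to $L$ now yields $t_i(Z_L)=m$, so $Z_L$ and $Z_M$ both belong to $\irr~\Lambda_{V,i,m}$. To identify the image of $Z_L$ under the bijection, pick $x$ in the open dense subset $\pi_Q^{-1}(\mathcal{O}_L)\cap\Lambda_{V,i,m}$ of $Z_L$ and let $x'\subset x$ be the unique $\Lambda$-submodule with $x/x'\cong S_i^m$. Then $V_i^{x'}=\Img(x_{h_{i-1}})+\Img(x_{h_i^*})$; since $\Img(x_{h_{i-1}})$ already coincides with $V_i^{M'}$ and has codimension $m$ in $V_i^x$, the $\Lambda$-codimension-$m$ condition forces $\Img(x_{h_i^*})\subseteq V_i^{M'}$, whence $\pi_Q(x')=M'$. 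Therefore the $G_{V'}$-orbit of $x'$ lies in $\pi_Q^{-1}(\mathcal{O}_{M'})\subseteq Z_{M'}$, and by irreducibility the component associated to $Z_L$ equals $Z_{M'}$. Injectivity of the bijection then gives $Z_L=Z_M$, hence $L\cong M$.

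The main obstacle I anticipate is the verification that $t_{i+1}(L)=0$: although the snake-lemma bound $t_{i+1}(M')\leq m$ is easy, one must also argue that the component of the generic extension class mapping into $V_{i+1}^{M'}$ surjects onto $\mathrm{coker}(V_i^{M'}\to V_{i+1}^{M'})$. This is an open condition on the irreducible vector space $\Ext^1(S_i^m,M')$, so it reduces to checking non-emptiness, but spelling this out rigorously requires translating between the cocycle description of $\mathcal{L}$ in Subsection 2.5 and the quiver-representation description of the extension.
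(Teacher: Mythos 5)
Your argument is correct in substance, but it takes a genuinely different route from the paper. The paper never looks at the component $Z_L$ of the generic extension $L$: it observes that $M$ is an extension of $S_i^m$ by $M'$, shows by an explicit change of basis that up to isomorphism there is only \emph{one} extension $N$ with $t_{i+1}(N)=0$, shows via $\Hom$-dimensions that any extension $L'$ with $t_{i+1}(L')>0$ cannot degenerate to $M$, and then concludes from Lemma 3.3 that $M$ itself must be the generic extension. You instead compute the invariants of the generic extension $L$ ($t_i(L)=m$, $t_{i+1}(L)=0$), apply Lemma 3.2 to $L$, and identify $Z_L$ with $Z_M$ through Lusztig's bijection $\irr~\Lambda_{V,i,m}\leftrightarrow\irr~\Lambda_{V',i,0}$ together with injectivity of $N\mapsto Z_N$; this is slicker once the geometric machinery is granted, whereas the paper's proof is more elementary and also yields the sharper fact that $M$ is the unique extension with $t_{i+1}=0$. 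Two points in your write-up deserve patching. First, the equality $\Img(x_{h_{i-1}}^{M'})=V_i^{M'}$, i.e.\ $t_i(M')=0$, is \emph{not} "by the very definition of $M'$": the definition only gives $t_i(Z_{M'})=0$, which involves both arrows of $\overline{Q}$ entering $i$ and in general does not force surjectivity of $x_{h_{i-1}}^{M'}$ (e.g.\ $Z_{S_2\oplus S_3}$ in type $A_3$ has $t_2=0$ as a component but $t_2(S_2\oplus S_3)=1$). It does hold here, because the paper's preamble to the lemma gives $M'\subset M$ with $M/M'\cong S_i^m$, whence $t_i(M)=t_i(M')+m$ and the hypothesis $t_i(M)=m$ forces $t_i(M')=0$; you use this equality both for $t_i(L)=m$ and in the final component identification, so it should be stated this way. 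Second, your $t_{i+1}(L)=0$ step can be shortened: since $\mathcal{O}_L$ is dense in the irreducible set $\mathcal{L}'$ of all extensions and $M\in\mathcal{L}'$, one has $L\leq_{deg}M$, and semicontinuity of the rank of $x_{h_i}$ gives $t_{i+1}(L)\leq t_{i+1}(M)=0$; likewise the phrase "by irreducibility the component associated to $Z_L$ equals $Z_{M'}$" is best replaced by the paper's inverse description $Z=\{x\in\Lambda_{V,i,m}\mid O(x)\subseteq Z'\}$, which shows directly that the dense subset $\pi_Q^{-1}(\mathcal{O}_L)\cap\Lambda_{V,i,m}$ of $Z_L$ lies in $Z_M\cap\Lambda_{V,i,m}$ and hence $Z_L\subseteq Z_M$, forcing equality of components.
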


\begin{proof}

By Lemma 3.2, $t_i(Z_M)=t_i(M)=m$. By the description above, $M$ is an extension of $S_i^m$ by $M'$.
Now $t_{i+1}(M)=0$ means that $M$ is the generic extension and we will explain the reasons.

Let $M'$ be the representation with the local part of $i$ as follows:

\[\xymatrix@C=3cm@R=2cm{V_{i-1}\ar[r]^{x_1}&V_i\ar[r]^{x_2}&V_{i+1}},\]

Then any extension $N$ of $M'$ by $S_i^m$ is of the form

\[\xymatrix@C=3cm@R=2cm{V_{i-1}\ar[r]^{\scriptsize \left(\begin{array}{cc}x_1&0\end{array}\right)}&V_i\oplus V'_i\ar[r]^{\scriptsize
\left(\begin{array}{c}x_2\\x'_2\end{array}\right)}&V_{i+1}}.\]

If $t_{i+1}(N)=0$, then ${\rm rank}\scriptsize \left(\begin{array}{c}x_2\\x'_2\end{array}\right)=\Dim V_{i+1}$.
If $ \scriptsize \left(\begin{array}{c}x_2\\x''_2\end{array}\right)$ is another matrix with rank $\Dim V_{i+1}$,
then we can find an invertible matrix $\scriptsize \left(\begin{array}{cc}I&0\\h_2&h_3\end{array}\right)$ such
that $\scriptsize \left(\begin{array}{cc}I&0\\h_2&h_3\end{array}\right)\left(\begin{array}{c}x_2\\x''_2\end{array}\right)
=\left(\begin{array}{c}x_2\\x'_2\end{array}\right)$. So we have the communicative diagram

\[\xymatrix@C=3cm@R=2cm{V_{i-1}\ar[r]^{\scriptsize \left(\begin{array}{cc}x_1&0\end{array}\right)}\ar[d]^{id}&V_i
\oplus V'_i\ar[r]^{\scriptsize \left(\begin{array}{c}x_2\\x'_2\end{array}\right)}\ar[d]_{\scriptsize \left(\begin{array}{cc}
I&0\\h_2&h_3\end{array}\right)}&V_{i+1}\ar[d]^{id}\\
 V_{i-1}\ar[r]^{\left(\begin{array}{cc}x_1&0\end{array}\right)}&V_i\oplus V'_i\ar[r]^{\scriptsize \left(\begin{array}{c}x_2\\x''_2\end{array}\right)}&V_{i+1}
}.\]

Then it is easy to see that under the sense of isomorphism, there is only one extension $N$ of $S_i^m$ by $M'$ such that $t_{i+1}(N)=0$.

For any extension $L$ of $S_i^m$ by $M'$ with $t_{i+1}(L)>0$, we have $0=\Dim\ \Hom_A(M,S_i)<\Dim\ \Hom_A(L,S_i)$. So $L$ can't degenerate to $M$.
By the existence of generic extension of Lemma 3.3., we see that $M$ is the generic extension.

\end{proof}

\begin{lem} Let $M$ be an $A$-module with $t_i(M)=m>0$, $t_{i+1}(M)=0$. Set $Z_{M'}\in \rm Irr~\Lambda_{V'}$ with $t_i(Z_{M'})=0$
corresponding to $Z_M$. Then $\widetilde{f}_M=1_{S_i^m}*P_{M'}$ can be written as the linear combination of $\{P_N\}_{M\leq_{deg} N}$
and the coefficient of $P_M$ is 1.
\end{lem}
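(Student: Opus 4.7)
The plan is to evaluate $\widetilde f_M = 1_{S_i^m}\ast P_{M'}$ at orbit representatives in $\Rep(\overrightarrow{A_n},V)$ and read off the coefficients in the PBW expansion. Expanding $\widetilde f_M = \sum_N a_N P_N$ in the PBW basis of $\mathcal M_V$ and invoking the Lemma of Section~2.4, which says that $P_K$ restricts to the characteristic function of $\mathcal O_K$ on $\Rep(\overrightarrow{A_n},V)$, we obtain $a_N = \widetilde f_M(x)$ for any $x \in \mathcal O_N$. Thus the statement reduces to two claims: (i) $a_N\neq 0$ implies $M\leq_{deg}N$, and (ii) $a_M=1$.

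First I would unpack the convolution. Since $x \in \Rep(\overrightarrow{A_n},V)$ has all dual arrows acting as zero, every $\Lambda$-subrepresentation of $x$ is automatically a $Q$-subrepresentation. A $Q$-subrep $x''$ with graded dimension $|M'|$ must satisfy $V_j^{x''} = V_j^x$ for $j\neq i$ and have $V_i^{x''}$ of codimension $m$ in $V_i^x$; the quotient $x/x''$ then has dimension vector $m\cdot i$ and is forced to be $S_i^m$. Hence $1_{S_i^m}(x/x'')\equiv 1$ on the Grassmannian of candidate $x''$, and
\[
a_N = \widetilde f_M(N) = \chi\bigl\{L\subset N \ :\ L\cong M',\ N/L\cong S_i^m\bigr\}.
\]

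For (i): if $a_N\neq 0$ the set above is nonempty, so there exists a short exact sequence $0\to M'\to N\to S_i^m\to 0$. By Lemma~3.4, $M$ is the generic extension of $S_i^m$ by $M'$; the construction in Lemma~3.3 presents the set $\mathcal L'$ of all such extensions as an irreducible constructible subset of $\Rep(\overrightarrow{A_n},V)$ with $\mathcal O_M$ as dense orbit. Hence $\mathcal O_N\subset \mathcal L'\subset \overline{\mathcal O_M}$, i.e.\ $M\leq_{deg}N$.

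For (ii): the submodule correspondence recalled just before this lemma identifies $M'$ with the submodule of $M$ whose $i$-th component is $\Img(x_{h_{i-1}}^M)$. Any $Q$-subrep $L\subset M$ contributing to the Grassmannian is determined by a codimension-$m$ subspace $V_i^L\subset V_i^M$ containing $\Img(x_{h_{i-1}}^M)$; but $t_i(M)=m$ already makes $\Img(x_{h_{i-1}}^M)$ of codimension $m$ in $V_i^M$, forcing $V_i^L=\Img(x_{h_{i-1}}^M)=V_i^{M'}$. The Grassmannian is thus a single point and $a_M=1$. The main subtlety lies in step (ii), where the demand $L\cong M'$ combined with the codimension equality $\codim \Img(x_{h_{i-1}}^M)=m$ collapses an a priori positive-dimensional Grassmannian of subspaces to a single point; the remaining bookkeeping is the translation from the convolution integral into $Q$-subrepresentation data and a direct appeal to Lemmas~3.3 and~3.4.
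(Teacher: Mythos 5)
Your proposal is correct and follows essentially the same route as the paper: expand $\widetilde f_M$ in the PBW basis via the characteristic-function description (Lemma 2.1), observe that a nonzero value at $N$ forces an extension $0\to M'\to N\to S_i^m\to 0$ and hence $M\leq_{deg}N$ because $M$ is the generic extension (this is Lemma 3.5, not 3.4 as you cite), and get $a_M=1$ from the uniqueness of the codimension-$m$ subrepresentation at vertex $i$ — you merely spell out the steps the paper labels ``obvious.''
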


\begin{proof}

By the definition of $\widetilde{f}_M$, $\widetilde{f}_M(N)\neq0$ if and only if there is an exact
sequence $0\rightarrow M'\rightarrow N\rightarrow S_i^m\rightarrow 0$. By Lemma 3.5, $M$ is the
generic extension of $S_i^m$ by $M'$, So $M\leq_{deg}N$. Then by Lemma 2.1, $\widetilde{f}_M=1_{S_i^m}*P_{M'}$
can be written as the linear combination of $\{P_N\}_{M\leq_{deg}N}$. It is obvious that $\widetilde{f}_M(M)=1$, so the coefficient of $P_M$ is 1.

\end{proof}

\begin{lem}

Let $M$ be an $A$-module with $t_i(M)=m>0$, $t_{i+1}(M)=0$. Set
$Z_{M'}\in \rm Irr~\Lambda_{V'}$ with $t_i(Z_{M'})=0$ corresponding
to $Z_M$. Let $M'\leq_{deg}M''$, then $1_{S_i^m}*P_{M''}$ can be
written as the linear combination of $\{P_N\}_{M\leq_{deg}N}$.

\end{lem}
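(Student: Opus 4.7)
The plan is to identify precisely the orbits $\mathcal{O}_N$ in $\mathrm{Rep}(\overrightarrow{A_n},V)$ on which $1_{S_i^m}\ast P_{M''}$ can take a nonzero value, and to show that every such $N$ satisfies $M\leq_{deg}N$. Because the restriction of $P_N$ to $\mathrm{Rep}(\overrightarrow{A_n},V)$ is the characteristic function $1_{\mathcal{O}_N}$ by Lemma 2.1, the coefficient of $P_N$ in the PBW expansion of $1_{S_i^m}\ast P_{M''}$ is exactly the value of this function at a point of $\mathcal{O}_N$, so controlling the support of the function controls the support of the expansion.

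First, from the definition of the convolution $\ast$, for $N\in \mathrm{Rep}(\overrightarrow{A_n},V)$ the value $(1_{S_i^m}\ast P_{M''})(N)$ can be nonzero only if $N$ admits a submodule isomorphic to $M''$ with quotient $S_i^m$, i.e.\ only if $N$ is an extension of $S_i^m$ by $M''$. Applying Lemma 3.3 to $M''$ and $S_i^m$, I will fix the generic extension $L$ of $S_i^m$ by $M''$, which has the property that $L\leq_{deg}N$ for every such extension $N$.

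The main step is then to verify $M\leq_{deg}L$. I would invoke Lemma 3.4 with $U=M'$, $W=M''$ (using the hypothesis $M'\leq_{deg}M''$) and the exact sequence $0\to M''\to L\to S_i^m\to 0$; this yields an extension $M_0$ of $S_i^m$ by $M'$ with $M_0\leq_{deg}L$. Independently, the hypotheses $t_i(M)=m$ and $t_{i+1}(M)=0$ are exactly what is needed to invoke Lemma 3.5, which says that $M$ itself is the generic extension of $S_i^m$ by $M'$; hence $M\leq_{deg}M_0$. Combining gives $M\leq_{deg}M_0\leq_{deg}L\leq_{deg}N$, and transitivity of $\leq_{deg}$ (immediate from the closure characterization $\mathcal{O}_N\subset\overline{\mathcal{O}_M}$) produces the desired $M\leq_{deg}N$.

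Finally, since $1_{S_i^m}\ast P_{M''}\in\mathcal{M}_V$ (as a product of elements of the subalgebra $\mathcal{M}$) and $\{P_N\}$ is a basis of $\mathcal{M}_V$ whose restrictions to $\mathrm{Rep}(\overrightarrow{A_n},V)$ are the $1_{\mathcal{O}_N}$, the coefficient $a_N$ in the expansion $1_{S_i^m}\ast P_{M''}=\sum_N a_N P_N$ equals $(1_{S_i^m}\ast P_{M''})$ evaluated at any point of $\mathcal{O}_N$, which vanishes unless $M\leq_{deg}N$ by the previous paragraph. The only step that genuinely uses the geometric hypothesis on $M$ is the middle paragraph, where Lemma 3.4 and Lemma 3.5 are married to sandwich $M$ below $L$; everything else is bookkeeping around the definition of $\ast$ and the orbit description of the PBW basis.
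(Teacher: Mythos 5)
Your argument is correct and follows essentially the same route as the paper: identify the support of $1_{S_i^m}\ast P_{M''}$ as the extensions $N$ of $S_i^m$ by $M''$, then combine Lemma 3.4 (using $M'\leq_{deg}M''$) with Lemma 3.5 (which makes $M$ the generic extension of $S_i^m$ by $M'$) to conclude $M\leq_{deg}N$, and finish with the orbit description of the $P_N$ from Lemma 2.1. Your extra detour through the generic extension $L$ of $S_i^m$ by $M''$ via Lemma 3.3 is harmless but not needed, since Lemma 3.4 applies directly to each extension $N$ in the support.
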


\begin{proof} $1_{S_i^m}*P_{M''}(N)\neq 0$ if and only if $N$ is an extension of $S_i^m$ by $M''$.
By Lemma 3.4, $M\leq_{deg} N $ because $M$ is the generic extension of  $S_i^m$ by $M'$. Then the lemma follows from lemma 2.1.

\end{proof}

Let $M$ be an $A$-module. Assume $0\subset M_k\subset M_{k-1}\subset\cdots\subset M_1\subset M_0=M$
is a submodule flag of $M$ such that $M/M_1\cong S_{i_1}^{m_1}, M_j/M_{j+1}\cong S_{i_j}^{m_j}$ and
for every $1\leq j\leq k-1$, $M_j$ is the generic extension of $S_{i_j}^{m_j}$ by $M_{i_{j+1}}$.
We call such a flag generic flag and denote it by $(M, S_{i_1}^{m_1},\cdots,S_{i_k}^{m_k})$.
If $M_k=S_{i_k}^{m_k}$, we say the flag is total.

\begin{lem} Let $M$ be an $A$-module and $(M, S_{i_1}^{m_1},\cdots,S_{i_k}^{m_k})$
be any generic flag of $M$. If $\rho_{Z_N}(1_{s_{i_1}^{m_1}}*\cdots*1_{s_{i_k}^{m_k}}*f_{M_k})\neq 0$, then $M\leq_{deg}N$.
\end{lem}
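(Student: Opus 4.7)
The plan is to prove the lemma by induction on the length $k$ of the generic flag. The base case $k=0$ is immediate: the convolution degenerates to $f_M$ itself, and the defining property of the semicanonical basis gives $\rho_{Z_N}(f_M)=\delta_{Z_N,Z_M}$, so $\rho_{Z_N}(f_M)\neq 0$ forces $N\cong M$, hence $M\leq_{deg}N$ trivially.

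For the inductive step, I write $h=1_{S_{i_1}^{m_1}}\ast g$ where $g=1_{S_{i_2}^{m_2}}\ast\cdots\ast 1_{S_{i_k}^{m_k}}\ast f_{M_k}$. Since $(M_1,S_{i_2}^{m_2},\ldots,S_{i_k}^{m_k})$ is a generic flag of $M_1$ of length $k-1$, the inductive hypothesis gives $\rho_{Z_{N_1}}(g)\neq 0\Rightarrow M_1\leq_{deg}N_1$. Expanding $g$ in the semicanonical basis of the appropriate graded piece $\mathcal{M}_{V_1}$, I obtain $g=\sum_{M_1\leq_{deg}N_1}\rho_{Z_{N_1}}(g)\,f_{N_1}$, whence $h=\sum_{M_1\leq_{deg}N_1}\rho_{Z_{N_1}}(g)\,(1_{S_{i_1}^{m_1}}\ast f_{N_1})$. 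Linearity of $\rho_{Z_N}$ on generic strata then forces, under the assumption $\rho_{Z_N}(h)\neq 0$, the existence of some $N_1$ with $M_1\leq_{deg}N_1$ and $\rho_{Z_N}(1_{S_{i_1}^{m_1}}\ast f_{N_1})\neq 0$, and it suffices to derive $M\leq_{deg}N$ from these two conditions.

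The core geometric step goes as follows. The condition $\rho_{Z_N}(1_{S_{i_1}^{m_1}}\ast f_{N_1})\neq 0$ says that for generic $x\in Z_N$, the Euler-characteristic sum $\sum_{y\subset x,\;x/y\cong S_{i_1}^{m_1}} f_{N_1}(y)$ is nonzero. By taking $x$ in the intersection of the finitely many dense open subsets of $Z_N$ where all the relevant constructible functions take their generic values, I locate a $\Lambda$-submodule $y\subset x$ with $x/y\cong S_{i_1}^{m_1}$ which is itself generic in $Z_{N_1}$, so that $\pi(y)\cong N_1$ while $\pi(x)\cong N$ as $A$-modules. The short exact sequence $0\to y\to x\to S_{i_1}^{m_1}\to 0$ of $\Lambda$-modules descends under $\pi$ to an exact sequence $0\to N_1\to N\to S_{i_1}^{m_1}\to 0$ of $A$-modules. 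Applying Lemma 3.4 with $M_1\leq_{deg}N_1$ produces an extension $0\to M_1\to M''\to S_{i_1}^{m_1}\to 0$ with $M''\leq_{deg}N$. Since $M$ is the generic extension of $S_{i_1}^{m_1}$ by $M_1$ (the top step of the generic flag), Lemma 3.3 yields $M\leq_{deg}M''$, and transitivity of $\leq_{deg}$ gives $M\leq_{deg}N$.

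The main obstacle is justifying the geometric assertion that one may pick $y$ generic in $Z_{N_1}$. A priori $f_{N_1}$ could be nonzero at non-generic points of $Z_{N_1}$ or of other irreducible components, so the nonzero Euler characteristic could in principle be accounted for by such non-generic contributions alone. Ruling this out requires a careful dimension count on the incidence variety $\{(x,y)\in Z_N\times\Lambda_{V_1}:y\subset x,\;x/y\cong S_{i_1}^{m_1}\}$ together with its two projections, combined with the fact that the generic-value locus of $f_{N_1}$ agrees with an open dense subset of $Z_{N_1}$ by construction of the semicanonical basis.
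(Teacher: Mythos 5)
Your reduction is sound up to the crucial point: linearity of $\rho_{Z_N}$, the expansion $g=\sum_{M_1\leq_{deg}N_1}\rho_{Z_{N_1}}(g)\,f_{N_1}$ (valid since the generic value on an irreducible component of a finite sum of constructible functions is the sum of generic values), and the final chain via Lemma 3.4, genericity of the extension $M$ of $S_{i_1}^{m_1}$ by $M_1$ (Lemmas 3.3 and 3.5), and transitivity of $\leq_{deg}$ all work and use the same tools as the paper. The genuine gap is exactly the step you flag yourself and do not close: from $\rho_{Z_N}(1_{S_{i_1}^{m_1}}\ast f_{N_1})\neq 0$ you claim that a generic $x\in Z_N$ admits a submodule $y$ with $x/y\cong S_{i_1}^{m_1}$ and $y$ generic in $Z_{N_1}$, so that $\pi(y)\cong N_1$. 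Nothing in the construction of the semicanonical basis controls $f_{N_1}$ away from generic points: the defining property only pins down the generic values $\rho_{Z'}(f_{N_1})$ on irreducible components, while $f_{N_1}$ may be nonzero on proper closed subsets of $Z_{N_1}$ and of every other component. Since an Euler characteristic is not monotone under passing to lower-dimensional strata, the nonvanishing of $\int_{y\subset x}f_{N_1}(y)$ can a priori be produced entirely by such non-generic $y$, and a dimension count on the incidence variety cannot by itself rule this out. So the ``core geometric step'' is not a technical verification to be deferred; it is the crux of the lemma, and as written your induction on $k$ does not go through.

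This is precisely the difficulty the paper's proof is designed to avoid: it never needs pointwise information about a semicanonical element sitting inside a convolution. Running a double induction (on $\dim M$ and downward along $\leq_{deg}$) and assuming $M\nleq_{deg}N$, it uses Lusztig's recursion $f_{M_k}=1_{S_{i_{k+1}}^{m_{k+1}}}\ast f_{M_{k+1}}-\sum_X a_X f_X$, where the correction terms are indexed by $X$ with $M_k\leq_{deg}X$, so that by Lemma 3.4 and the induction hypothesis their contribution to $\rho_{Z_N}$ vanishes; iterating pushes the semicanonical factor down the flag until it disappears, leaving the pure product $1_{S_{i_1}^{m_1}}\ast\cdots\ast 1_{S_{i_n}^{m_n}}$ attached to a total generic flag. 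For such a product, pointwise nonvanishing at a generic point $Y$ of $Z_N$ has a direct module-theoretic meaning: $Y$, hence the $A$-module $N=\pi(Y)$, possesses a flag with subquotients $S_{i_j}^{m_j}$, and iterating Lemmas 3.3 and 3.4 along that flag gives $M\leq_{deg}N$. If you want to salvage your induction on the flag length, you would have to replace the semicanonical expansion of $g$ by such a recursion that keeps only characteristic functions of simples on the left; doing so essentially reproduces the paper's argument.
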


\begin{proof}

If $\dim M=1$ or $M$ is semisimple, it is easy to check. Now, assume the assertion is true
for all $L$ such that $\dim L<\dim M$ or $M\leq_{deg}L$ and assume that $M\nleqslant_{deg}N$

Set $t_{i_{k+1}}(M_k)=m_{k+1}>0, t_{i_{k+1}+1}(M_k)=0$ and $Z_{M_{k+1}}\in \irr~\Lambda_{V_{k+1}}$
with $t_{i_{k+1}}(Z_{M'})=0$ corresponding to $Z_{M_k}$, then $M_{k}$ is the generic extension
of $S_{i_{k+1}}^{m_{k+1}}$ by $M_{k+1}$ by lemma 3.5. By the construction of Lusztig \cite[the proof of Lemma 2.2]{L4}, we have

\[f_{M_k}=1_{S_{i_{k+1}}^{m_{k+1}}}*f_{M_{k+1}}-\sum_{Z_L\in \irr(\Lambda_{V}), t_i(Z_L)>m}\rho_{Z_L}(1_{S_{i_{k+1}}^{m_{k+1}}}*f_{M_{k+1}})f_{L}.\]
By the induction hypothesis,
\begin{eqnarray*} 1_{s_{i_1}^{m_1}}*\cdots*1_{s_{i_k}^{m_k}}*f_{M_k}&=&1_{s_{i_1}^{m_1}}*\cdots*1_{s_{i_k}^{m_k}}*(1_{s_{i_{k+1}}^{m_{k+1}}}*f_{M_{k+1}}\\
&&- \sum_{M_{k+1}\leq_{deg}X}a_Xf_X)\\
&=&1_{s_{i_1}^{m_1}}*\cdots*1_{s_{i_k}^{m_k}}*1_{s_{i_{k+1}}^{m_{k+1}}}*f_{M_{k+1}}\\
&&- 1_{s_{i_1}^{m_1}}*\cdots*1_{s_{i_k}^{m_k}}*\sum_{M_{k}\leq_{deg}X}a_Xf_X,
\end{eqnarray*}
where $a_X=\rho_{Z_X}(1_{S_{i_{k+1}}^{m_{k+1}}}*f_{M_{k+1}})$.

We set $(\overline{X}, S_{i_1}^{m_1},\cdots,S_{i_k}^{m_k})$ is the generic flag of $\overline{X}$
with $0\subset X\subset X_{k-1}\subset\cdots\subset X_1\subset \overline{X}$.

By lemma 3.4, we have $M\leq_{deg}\overline{X}$. Thus $\overline{X}\nleqslant_{deg}N$. By hypothesis,
$\rho_{Z_N}(1_{s_{i_1}^{m_1}}*\cdots*1_{s_{i_k}^{m_k}}*\sum_{M_{k+1}\leq_{deg}X}a_Xf_X)=0$.
So, we have $\rho_{Z_N}(1_{s_{i_1}^{m_1}}*\cdots*1_{s_{i_k}^{m_k}}*1_{s_{i_{k+1}}^{m_{k+1}}}*f_{M_{k+1}})\neq 0$.
Recursively, we get that $\rho_{Z_N}(1_{s_{i_1}^{m_1}}*\cdots*1_{s_{i_k}^{m_k}}*1_{s_{i_{n}}^{m_{n}}})\neq 0$,
where $(M, S_{i_1}^{m_1},\cdots,S_{i_n}^{m_n})$ is a total generic flag of $M$.

Then there is a dense subset $\mathcal{S}$ of $Z_N$ such that
the value of $1_{s_{i_1}^{m_1}}*\cdots*1_{s_{i_k}^{m_k}}*1_{s_{i_{n}}^{m_{n}}}$ on $\mathcal{S}$
is a nonzero constant and $\pi(\mathcal{S})$ is $\mathcal{O}_N$. For any $Y\in\mathcal{S}$,
there is a flag $0\subset Y_n\subset\cdots\subset Y$ of $Y$ such that $Y_j/Y_{j+1}=S_{i_j}^{m_j}$
for all $1\leq j\leq n-1$ and $Y_n=S_{i_n}^{m_n}$. we have $N$ also has such a flag because
any flag of $Y$ is a flag of $N$. Thus $M\leq_{deg} N$.
\end{proof}

\begin{remark} For our choice of $A$, any $A$-module $M$ has a total generic flag. In fact,
there always exists $i$ such that $t_i(M)=m>0$ and $t_{i+1}(M)=0$, then by lemma 3.5,
there is a submodule $M'$ such that $M$ is the generic extension of $S_i^m$ by $M'$.
Recursively, we can find a total generic flag of $M$.
\end{remark}

Now we can prove our main theorem.

\begin{theo} Let $\frg=sl_{n+1}(\mathbb{C})$ and $U^+$ be the positive part of the
universal enveloping algebra of $\frg$. We choose the linear orientation $\overrightarrow{A_n}$
for the Dynkin graph $A_n$ of $\frg$. Let $\{f_M\}_{M\in \Rep(\overrightarrow{A_n}, V)}$
and $\{P_M\}_{M\in \Rep(\overrightarrow{A_n}, V)}$ be the semicanonical basis and the PBW basis of $U_V^+$.
Then, $f_M=\sum_{M\leq_{deg}N}a_NP_N$ and $a_M=1$.
\end{theo}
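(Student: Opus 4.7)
The plan is a double induction: outer on $\Dim V$, and, for each fixed $V$, inner reverse induction on $\leq_{deg}$, whose maximal elements are the semisimple modules (for which $f_M = P_M$ trivially, since then $\Lambda_V$ consists of a single orbit). The inductive step is driven by Lusztig's recursive construction of $f_M$, with the various summands controlled by Lemma 3.8 on one side and by Lemmas 3.6 and 3.7 on the other.

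In the inductive step, Remark 3.1 supplies an index $i$ with $t_i(M)=m>0$ and $t_{i+1}(M)=0$; let $M'\subset M$ be the submodule with $M/M'\cong S_i^m$, so that $M$ is the generic extension of $S_i^m$ by $M'$ (Lemma 3.5) and $|V'|<|V|$. Lusztig's construction (\cite[the proof of Lemma 2.2]{L4}, used already in the proof of Lemma 3.8) yields
\[
f_M \;=\; 1_{S_i^m}*f_{M'} \;-\; \sum_{L\,:\,t_i(Z_L)>m} c_L\, f_L, \qquad c_L:=\rho_{Z_L}(1_{S_i^m}*f_{M'}).
\]
By Lemma 3.8 applied to the one-step flag $(M,S_i^m)$, every nonzero $c_L$ satisfies $M\leq_{deg}L$; together with $t_i(Z_L)>m=t_i(Z_M)$, this yields $M<_{deg}L$ strictly. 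The inner inductive hypothesis then expands each such $f_L$ as $\sum_{L\leq_{deg}K} a^L_K P_K$, so $\sum_L c_L f_L$ becomes a combination of $\{P_K:M<_{deg}K\}$ with no $P_M$-term.

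For the main term, the outer inductive hypothesis gives $f_{M'}=\sum_{M'\leq_{deg}X}a^{M'}_X P_X$ with $a^{M'}_{M'}=1$; Lemma 3.6 handles the summand $X=M'$, and Lemma 3.7 handles the summands $X>_{deg}M'$, which together put $1_{S_i^m}*f_{M'}$ into the form $\sum_{M\leq_{deg}N} e_N P_N$. Combining,
\[
f_M \;=\; \sum_{M\leq_{deg}N} e_N P_N \;-\; \sum_{M<_{deg}K} d_K P_K,
\]
which already has the required support; it remains to establish $e_M=1$.

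This last step is the main obstacle, since Lemma 3.7 offers no information about the $P_M$-coefficient of $1_{S_i^m}*P_X$ for $X>_{deg}M'$, so $e_M$ cannot be read off the expansions directly. I would instead compute $e_M$ via Lemma 2.1: it equals the value of $1_{S_i^m}*f_{M'}$ at $M\in\mathcal{O}_M$, and the convolution formula gives
\[
e_M \;=\; \int_{x''\subset M} 1_{S_i^m}(M/x'')\, f_{M'}(x'').
\]
Because $\Dim\Hom_A(M,S_i)=t_i(M)=m$, the surjections $M\twoheadrightarrow S_i^m$ form a single $\text{Aut}(S_i^m)$-torsor, so there is a unique submodule $x''\subset M$ with $M/x''\cong S_i^m$, and this $x''$ is necessarily isomorphic to $M'$ (since $M$ is the generic extension). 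The Euler-characteristic integral thus collapses to $f_{M'}(M')=a^{M'}_{M'}=1$ by the outer induction, giving $e_M=1$ and completing the proof.
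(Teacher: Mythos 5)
Your proposal follows essentially the same route as the paper: the same double induction (outer on $\Dim V$, inner along $\leq_{deg}$ with the semisimple module as the unique maximal element), the same use of Lusztig's recursion, with Lemma 3.8 applied to the one-step generic flag to control the correction terms and Lemmas 3.5--3.7 to control $1_{S_i^m}*f_{M'}$. Your evaluation argument for $e_M=1$ --- reading off the $P_M$-coefficient as the value of $1_{S_i^m}*f_{M'}$ at the point $M$, using that $\Dim\Hom_A(M,S_i)=t_i(M)=m$ forces a unique submodule $x''\subset M$ with $M/x''\cong S_i^m$, which must be (isomorphic to) $M'$ --- is correct, and it actually supplies a detail the paper glosses over: as you note, Lemma 3.7 says nothing about the $P_M$-coefficient of $1_{S_i^m}*P_X$ for $X>_{deg}M'$, and your computation is precisely what shows that this coefficient vanishes, so that the paper's assertion $c_M=1$ is justified. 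Your explicit remark that $t_i(Z_L)>m$ forces $M<_{deg}L$ strictly, so the correction terms contribute no $P_M$, is likewise a point the paper leaves implicit.

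The one genuinely incorrect point is your justification of the base case: for semisimple $M$ it is false that $\Lambda_V$ consists of a single orbit (already $\Rep(Q,V)$ and the locus where all unstarred arrows vanish are distinct irreducible components of $\Lambda_V$ whenever two adjacent vertices carry nonzero spaces), so $f_M=P_M$ is not trivial for that reason. The statement is nevertheless true and easily repaired: the paper argues it by taking $i$ maximal with $V_i\neq 0$, so that $m=t_i(M)=\Dim V_i$ is maximal, the correction sum in Lusztig's recursion is empty, and induction on dimension gives $f_M=1_{S_i^{r_i}}*\cdots*1_{S_1^{r_1}}=P_M$; alternatively, your own inductive step applies verbatim to semisimple $M$ (the correction sum is empty, and $M\leq_{deg}N$ forces $N\cong M$ since $\mathcal{O}_M$ is closed in $\Rep(Q,V)$), so no separate base case is really needed. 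With that repair your argument is complete and coincides in substance with the paper's proof.
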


\begin{proof}

We induct on the dimension of $V$. If $\Dim\ V=1$, the result is trivial.
We assume the result is right for $\Dim\ V\leq n$. For $\Dim\ V= n+1$, Let $M\in \Rep(V, Q)$,
we can always find a vertex $i$ such that $t_i(M)=m>0$ and $t_{i+1}(M)=0$ (just check from $n$).
If $M$ is semisimple, we can easily verify $f_M=P_M$. In fact, we assume $M=\oplus_{i\in Q_0}S_i^{r_i}$.
By Lusztig's construction \cite[the proof of Lemma 2.2]{L4}, $f_M=1_{S_n^{r_n}}*f_{M'}$ where $M'=M/S_n^{r_n}$,
by the induction we get $f_M=1_{S_n^{r_n}}*\cdots*1_{S_1^{r_1}}=P_M$. So we can assume the theorem is true for all $L$ such that $M\leq_{deg}L$.

Set $Z_{M'}\in \irr~\Lambda_{V'}$ with $t_i(Z_{M'})=0$ corresponds to $Z_M$. By the construction of Lusztig \cite[the proof of Lemma 2.2]{L4},

\[f_M=1_{S_i^m}*f_{M'}-\sum_{Z_L\in \irr(\Lambda_{V}), t_i(Z_L)>m}\rho_{Z_L}(1_{S_n^m}*f_{M'})f_{L}.\]
By the induction, $f_{M'}=\sum_{M'\leq_{deg}N'}b_{N'}P_{N'}$ with $b_{M'}=1$. From Lemma 3.6 and Lemma 3.7,
we get $1_{S_i^m}*f_{M'}=\sum_{M\leq_{deg}N}c_{N}P_{N}$ with $c_{M}=1$. From Lemma 3.8, we know

\[\sum_{Z_L\in \irr(\Lambda_{V}), t_i(Z_L)>m}\rho_{Z_L}(1_{S_n^m}*f_{M'})f_{L}=\sum_{Z_L\in \irr(\Lambda_{V}),
M\leq_{deg}L, t_i(Z_L)>m}\rho_{Z_L}(1_{S_n^m}*f_{M'})f_{L}.\]

By the second induction hypothesis, $f_{L}$ can be written as the linear combination of $P_{L'}$
where $L\leq_{deg}L'$. Because $M\leq_{deg}L$, we get $f_M=\sum_{N\leq_{deg}M}a_NP_N$ and $a_M=1$.

\end{proof}

We can refine the partial order $\leq_{deg}$ to an total order. It is easy to get the follow Corollary.

\begin{cor}
 Let $\frg=sl_{n+1}(\mathbb{C})$ and $U^+$ be the positive part of the universal enveloping algebra of $\frg$.
 We choose the linear orientation $\overrightarrow{A_n}$ for the Dynkin graph $A_n$ of $\frg$.
 Then the matrix between the PBW bases of type $\overrightarrow{A_n}$ and the semicanonical bases is
 upper triangular unipotent under any order which is compatible with the partial order $\leq_{deg}$.
\end{cor}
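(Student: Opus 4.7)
The plan is to extract the corollary as a direct consequence of Theorem~3.1 by refining \(\leq_{deg}\) to any compatible total order. First, I fix a graded vector space \(V\); since \(\overrightarrow{A_n}\) is representation-finite, the set of isomorphism classes in \(\Rep(\overrightarrow{A_n},V)\) is finite, and a standard topological-sort argument gives that the finite poset of isomorphism classes under \(\leq_{deg}\) admits linear extensions. By the convention made just before the corollary, a total order \(\preceq\) is ``compatible with \(\leq_{deg}\)'' exactly when \(M\leq_{deg}N\) implies \(M\preceq N\); so such compatible total orders are precisely the linear extensions of \(\leq_{deg}\), and it suffices to work with an arbitrary one.

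Next, I would enumerate the isomorphism classes in \(\preceq\)-order as \(M_1\prec M_2\prec\cdots\prec M_r\) and define the change-of-basis matrix \(T=(a_{jk})\) by
\[
f_{M_j}=\sum_{k=1}^r a_{jk}\,P_{M_k}.
\]
Applying Theorem~3.1 to each \(M_j\) gives \(a_{jk}=0\) unless \(M_j\leq_{deg}M_k\), together with \(a_{jj}=1\). Compatibility of \(\preceq\) with \(\leq_{deg}\) then upgrades the condition \(M_j\leq_{deg}M_k\) to the numerical inequality \(j\leq k\), so \(T\) has zero entries strictly below the diagonal and ones on the diagonal, which is the desired upper triangular unipotent form. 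Ranging over all \(V\) and assembling the blocks gives the full transition matrix between \(\{P_M\}\) and \(\{f_M\}\).

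There is no genuine obstacle here: the real representation-theoretic content---the generic-extension arguments of Lemmas~3.3--3.5 and the inductive expansion in Theorem~3.1---has already been spent. The corollary is essentially a translation of Theorem~3.1 into matrix language under a chosen linearisation of \(\leq_{deg}\), and the only non-tautological ingredient is the elementary existence of a linear extension of a finite partial order.
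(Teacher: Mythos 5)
Your proposal is correct and matches the paper's own treatment: the paper derives the corollary exactly by refining $\leq_{deg}$ to a total order and reading Theorem~3.1 (triangularity of $f_M=\sum_{M\leq_{deg}N}a_NP_N$ with $a_M=1$) as upper triangular unipotence of the transition matrix, block by block over the graded pieces $U^+_V$. Your only extra content --- the existence of a linear extension of the finite poset --- is exactly the ``refinement'' step the paper leaves implicit.
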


\section{Example}

Let $Q$ and its double quiver $\overline{Q}$ as follows:

\[\xymatrix{Q :&1\ar[r]&2},  \ \ \ \ \ \ \UseAllTwocells\xymatrix{\overline{Q}: &1\rtwocell<2>_\alpha^\beta{'}&2}\]

\renewcommand\arraycolsep{0.1cm}
\newcommand{\one}{\begin{array}{c}2\vspace{-10pt}\\1\end{array}}
\newcommand{\two}{\begin{array}{c}1\vspace{-10pt}\\2\end{array}}

Set $V=V_1\oplus V_2$, $\Dim V_1=\Dim V_2=2$. There are three nonisomormphism representations in $\Rep(Q,V)$,
namely, $M_1=1^2\oplus 2^2$, $M_2=1\oplus\two\oplus 2$, $M_3=\two\oplus\two$. We have $M_1\leq_{deg}M_2\leq_{deg}M_3$, so we order them $M_3<M_2<M_1$.

We have

$Z_{M_1}=\overline{\pi^{-1}(\mathcal{O}_{M_1})}=\mathcal{O}_{\scriptsize{\one}\oplus 1
\oplus 2}\cup\mathcal{O}_{1^2\oplus 2^2}\cup\mathcal{O}_{\scriptsize{\one}\oplus \scriptsize{\one}}$

$Z_{M_2}=\overline{\pi^{-1}(\mathcal{O}_{M_2})}=\mathcal{O}_{\scriptsize{\one}\oplus 1\oplus 2}\cup
\mathcal{O}_{\scriptsize{\one}\oplus \scriptsize{\two}}\cup\mathcal{O}_{1^2\oplus 2^2}\cup\mathcal{O}_{\scriptsize{\two}\oplus 1\oplus 2}$

$Z_{M_3}=\overline{\pi^{-1}(\mathcal{O}_{M_3})}=\mathcal{O}_{\scriptsize{\two}\oplus 1\oplus 2}
\cup\mathcal{O}_{\scriptsize{\two}\oplus \scriptsize{\two}}\cup\mathcal{O}_{1^2\oplus 2^2}$

It is easy to calculate the semicanonical basis from the definition:

$f_{M_1}=\frac{1}{4}(1_2^2*1_1^2)$, $f_{M_2}=\frac{1}{2}(1_2*1_1*1_1*1_2)$, $f_{M_3}=\frac{1}{4}(1_1^2*1_2^2)$.

It is easy to see that

$f_{M_1}=P_{M_1}$, $f_{M_2}=P_{M_2}+2P_{M_1}$, $f_{M_3}=P_{M_1}+P_{M_2}+P_{M_3}$.

So, the matrix between $(f_{M_3}, f_{M_2}, f_{M_1})$ and $(P_{M_3}, P_{M_2}, P_{M_1})$ is upper triangular unipotent.

\end{CJK*}

\begin{thebibliography}{100}


\bibitem[BK]{BK} P.Baumann, J.Kamniter, Preprojective algebras and MV polytopes. Represent. Theory, 16(2012), 152-188.

\bibitem[B1]{B1} K.Bongartz, Minimal singularities for representations of Dykin quivers. Comment.Math.Helvetici, 69(1994), 575-611.

\bibitem[B2]{B2} K.Bongartz, On degenerations and extensions of finite dimensional modules. Adv.Math.,  121(1996), 245-287.

\bibitem[GLS]{GLS} C.Gei{\ss}, B.Leclerc, J.Schr\"{o}er, Semicanonical bases and preprojective algebras. Ann.Sci.\'{E}c.Norm.Super, 38(2005), 193-253.

\bibitem[K]{K} M.Kashiwara, On the crystal bases of the q-analogue of universal enveloping algebras. Duke Math.J., 63(1991), 465-516.

\bibitem[KS]{KS} M.Kashiwara, Y.Saito, Geometric construction of crystal bases. Duke Math.J., 89(1997), 9-36.

\bibitem[L1]{L1} G.Lusztig, Canonical bases arising from quantized enveloping algebras. J.Amer.Math.Soc., 3(1990), 447-498.

\bibitem[L2]{L2} G.Lusztig, Canonical bases arising from quantized enveloping algebras \uppercase\expandafter{\romannumeral
2}. Progr.Theoret.Phys.Suppl., 102(1990), 175-201.

\bibitem[L3]{L3} G.Lusztig, Quivers, perverse sheaves, and qunantized enveloping algebras. J.Amer.Math.Soc., 4(1991), 365-421.

\bibitem[L4]{L4} G.Lusztig, Semicanonical bases arising from enveloping algebras. Adv.Math.,  151(2000), 129-139.

\bibitem[S]{S} O.Schiffmann, Lectures on hall algebras.  arXiv:math/0611617v2.



\end{thebibliography}
\end{document}